\newtheorem{prop}{Proposition}[section]
\newtheorem{thm}[prop]{Theorem}
\newtheorem{cor}[prop]{Corollary}
\theoremstyle{definition}
\newtheorem{rem}[prop]{Remark}
\newtheorem*{ack}{Acknowledgement}
\def\co{\colon\thinspace}
\newcommand{\C}{\mathbb C}
\newcommand{\rmd}{\mathrm d}
\newcommand{\rme}{\mathrm e}
\newcommand{\rmi}{\mathrm i}
\newcommand{\N}{\mathbb N}
\newcommand{\R}{\mathbb R}
\newcommand{\Z}{\mathbb Z}
\DeclareMathOperator{\eh}{\mathrm{EH}}
\DeclareMathOperator{\length}{\mathrm{length}}
\begin{document}

\author[Kai Zehmisch]{Kai Zehmisch}
\address{Mathematisches Institut, Universit\"at zu K\"oln, Weyertal 86--90, 50931 K\"oln,Germany}
\email{kai.zehmisch@math.uni-koeln.de}

\title[Lagrangian non-squeezing]{Lagrangian non-squeezing and a geometric inequality}

\date{28.10.2013}

\begin{abstract}
  We prove that if the unit codisc bundle of a closed Riemannian manifold
  embeds symplectically into a symplectic cylinder of radius one
  then the length of the shortest non-trivial closed geodesic is at most
  half the area of the unit disc.
\end{abstract}

\subjclass[2010]{53D35, 53C22}
\thanks{The author is partially supported by DFG grant
ZE 992/1-1.}

\maketitle


\section{Introduction\label{intro}}

Consider a metric $g$ on the unit circle $S^1=\partial D$.
After a reparametrization by arc length $g=r^2g_0$
becomes a positive multiple of the standard metric on $S^1$.
If the unit codisc bundle of $(S^1,g)$
embeds into the unit disc $D$
such that the area and the orientation
are preserved
then
\[
2\length_g(S^1)\leq\pi.
\]
This is equivalent to $r\leq\tfrac14$.
In other words
\[
\inf(g)\leq\frac{\pi}{2},
\]
where $\inf(g)$ denotes the length
of the shortest non-trivial closed geodesic.
In this article
we prove a symplectic generalization of this inequality.
For any closed Riemannian manifold $(L,g)$
such that the unit codisc bundle $D^*(g)L$
has a symplectic embedding into the cylinder
$Z=D\times\R^{2n-2}$
which is provided with the standard split symplectic form
$\inf(g)\leq\frac{\pi}{2}$ holds true.
In fact,
if the dimension is $\geq4$
we only require that the symplectic embedding
exists in a neighbourhood 
of the unit cosphere bundle $S^*(g)L$,
see Corollary \ref{satz1}.

The proof of this result
is based on the {\bf link capacity} $\ell$
that we introduce in Theorem \ref{tool}.
For a given subset $U$
the capacity $\ell$
measures the largest minimal total action
of null-homologous Reeb links
on a contact type hypersurface in $U$
that is diffeomorphic to a unit cotagent bundle.
This is a variant
of the capacity introduced in \cite{geizeh11,geizeh12},
cf.\ \cite{zehzil12}
and fits into a larger class of
so called \emph{embedding capacities},
see \cite{chls07}.

The first embedding capacity appeared
in the unpublished work \cite{cielmoh},
cf.\ \cite{chls07}.
In \cite{cielmoh} Cieliebak and Mohnke
defined a Lagrangian embedding capacity for $2$-connected
symplectic manifolds $(V,\omega)$
\[
c_L(V,\omega)
:=\sup\{\inf(L)\,|\,
\text{$L\subset(V,\omega)$}\},
\]
where $\inf(L)$ denotes the least positive symplectic area
of a smooth disc in $V$ with boundary on $L$.
Here the supremum runs over all Lagrangian
\emph{tori} in $(V,\omega)$.
The values on the unit symplectic cylinder $Z$,
unit ball $B$, and unit polydisc $P$
are
\[
c_L(Z)=\pi,\qquad
c_L(B)=\frac{\pi}{n},\qquad\text{and}\qquad
c_L(P)=\pi.
\]
This capacity gives an alternative proof
of the Ekeland-Hofer non-squeezing theorem \cite[Corollary 3]{ekho90}.
It states
that if the polydisc
\[
P(r_1,\ldots,r_n):=D_{r_1}\times\ldots\times D_{r_n}
\]
with radii $0<r_1\leq\ldots\leq r_n$
embeds symplectically into the ball $B_R$ of radius $R$
then $\sqrt{n}\;r_1\leq R$.
Generalizing to arbitrary Lagrangian submanifolds
Swoboda and Ziltener \cite{swozil12,swozil12b}
obtained a symplectic capacity $a_L\geq c_L$
for $2$-connected symplectic manifolds $(V,\omega)$ via
\[
a_L(V,\omega)
:=\sup\{\inf(L)\,|\,
\text{$L\subset(V,\omega)$ closed Lagrangian submanifold}\}
\]
such that
\[
a_L(Z)=\pi,\qquad
a_L(B)\geq\frac{\pi}{2}.
\]
The precise value on the unit ball is not known.
In fact Swoboda and Ziltener assign
a capacity to a large class of coisotropic submanifolds
for every possible codimension
and prove non-squeezing results for so-called small sets,
see \cite{swozil12,swozil12b}.
The link capacity (see Theorem \ref{tool})
can be seen as another example in this direction.

In Corollary \ref{nsq1} and \ref{nsq2}
we give further non-squeezing results for Lagrangian submanifolds.
For that we use 
Chekanov's elementary tori,
see \cite{chek96},
Damian's proof of the Audin conjecture in the monotone case,
see \cite{damian},
and Lagrangian embedding capacities
that we introduce in Section \ref{area}.


\section{Measuring the area\label{area}}

We are interested in {\bf special capacities} $a$
on the standard symplectic vector space $\R^{2n}$,
which are (1) monotone on subsets of $\R^{2n}$,
i.e.\ $a(U_1)\leq a(U_2)$ provided $U_1\subset U_2$,
(2) invariant under \emph{global}
symplectomorphisms of $\R^{2n}$,
(3) conformal in the sense that $a(rU)=r^2a(U)$
for all $U\subset\R^{2n}$ and $r\in\R$,
and (4) satisfy
\[
a(Z)<\infty,\qquad
a(B)>0,
\]
see \cite[p.\ 172]{hoze94}.
The aim is to measure the minimal symplectic area
$\inf(L)$ of closed Lagrangian submanifolds
$L$ in $\R^{2n}$
among \emph{all} smooth discs attached to $L$.
In other words we consider
the Liouville class $\lambda_L=[\lambda |_{TL}]$
for any primitive $\lambda$ of
$\rmd\mathbf{x}\wedge\rmd\mathbf{y}$.
The image of $H_1(L;\Z)$ under $\lambda_L$
generates a subgroup $\Lambda_L$ of $\R$.
If this group is discrete we call $L$ {\bf rational}
and $\inf(L)$ is the positive generator of $\Lambda_L$;
otherwise $\inf(L)$ is zero, see \cite{polt01}.

For our first version of a special capacity
we consider for real numbers 
\[
0<r_1\leq\ldots\leq r_n
\]
the
{\bf elementary Lagrangian tori}
\[
T(r_1,\ldots,r_n):=
\partial D_{r_1}\times\ldots\times\partial D_{r_n}
\]
in $\R^{2n}$.
We call two closed Lagrangian submanifolds of $\R^{2n}$
{\bf symplectomorphic}
if there exists a global symplectomorphism of $\R^{2n}$
which maps one to the other.
It follows from
\cite[Theorem A]{chek96}
that the first radius
\[
r_1=r_1(L)
\]
of a Lagrangian torus $L$ symplectomorphic
to $T(r_1,\ldots,r_n)$
is an invariant under global symplectomorphisms.

\begin{thm}
  For subsets $U$ in $\R^{2n}$ the quantity
  \[
  a_e(U)
  :=\sup\big\{\pi \big(r_1(L)\big)^2\,|\,
  \text{$L\subset U$}\big\},
  \]
  where the supremum is taken over
  all Lagrangian tori $L$ symplectomorphic to an
  elementary torus,
  defines a special capacity in $\R^{2n}$
  such that
  \[
  a_e(Z)=\pi,\qquad
  a_e(B)=\frac{\pi}{n}.
  \]
\end{thm}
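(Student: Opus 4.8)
The plan is to handle the four defining properties together with the two normalising values, deferring the genuinely symplectic input to the end.

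The first three axioms are formal. Monotonicity is immediate, since enlarging $U$ only enlarges the set of admissible tori over which the supremum defining $a_e$ is taken. For invariance under a global symplectomorphism $\psi$ of $\R^{2n}$, note that $L\mapsto\psi(L)$ is a bijection between the elementary-type Lagrangian tori contained in $U$ and those contained in $\psi(U)$, and that $r_1\big(\psi(L)\big)=r_1(L)$ by the invariance of the first radius quoted from \cite[Theorem A]{chek96}; hence $a_e\big(\psi(U)\big)=a_e(U)$. For conformality I would use the dilation $\delta_r\co x\mapsto rx$, which satisfies $\delta_r^*(\rmd\mathbf{x}\wedge\rmd\mathbf{y})=r^2\,\rmd\mathbf{x}\wedge\rmd\mathbf{y}$. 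Conjugation by $\delta_r$ turns a global symplectomorphism into a global symplectomorphism, and $\delta_r$ carries $T(r_1,\ldots,r_n)$ to $T(|r|r_1,\ldots,|r|r_n)$; consequently $r_1\big(\delta_r(L)\big)=|r|\,r_1(L)$ for every elementary-type torus $L$, and $a_e(rU)=r^2a_e(U)$ follows by the substitution $L=\delta_r^{-1}(L')$ in the supremum.

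Next I would establish the two lower bounds by exhibiting explicit standard tori, which also settles the positivity $a_e(B)>0$ in axiom (4). For the cylinder $Z=D\times\R^{2n-2}$ take $T(r,1,\ldots,1)$ with $0<r<1$: its first factor $\partial D_r$ lies in the open unit disc $D$ while the remaining factors lie in $\R^{2n-2}$, so $T(r,1,\ldots,1)\subset Z$ with $r_1=r$, and letting $r\to1$ gives $a_e(Z)\geq\pi$. For the ball take the Clifford-type torus $T(r,\ldots,r)$; its points satisfy $|z|^2=nr^2$, so it lies in $B$ as soon as $nr^2<1$, and letting $r\to1/\sqrt{n}$ gives $a_e(B)\geq\pi/n>0$.

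The heart of the matter, and the main obstacle, is the matching pair of upper bounds $a_e(Z)\leq\pi$ and $a_e(B)\leq\pi/n$; the former also yields the finiteness $a_e(Z)<\infty$ in axiom (4). The strategy is to reinterpret $\pi\big(r_1(L)\big)^2$ analytically: for an elementary-type torus $L$ this number is the minimal symplectic area of a Maslov-index-$2$ holomorphic disc with boundary on $L$, and by \cite[Theorem A]{chek96} it is unchanged under global symplectomorphisms. It then suffices to produce, for every closed Lagrangian torus $L$ in the region, \emph{one} holomorphic disc with boundary on $L$, of Maslov index $2$ and of controlled area, since the minimal such area is bounded above by the area of the disc we have found. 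For $Z$ this is a Gromov--Cieliebak-Mohnke non-squeezing argument, yielding a disc of area at most the cross-sectional area $\pi$ of $D$, whence $\pi r_1^2\leq\pi$. For the ball the same philosophy applies but the bound must be sharpened to $\pi/n$: here I would compactify $B\subset\C^n$ to $\C P^n$ with the complement a hyperplane and the line of area $\pi$, and use holomorphic spheres together with the theory of monotone Lagrangian tori to produce a Maslov-$2$ disc on $L$ of area at most $\pi/n$ — equivalently, exhibit $n$ holomorphic discs in independent relative classes whose areas total at most $\pi$, so that the least of them is at most $\pi/n$. Combined with the lower bounds this gives $a_e(Z)=\pi$ and $a_e(B)=\pi/n$. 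The delicate points, where I expect the real work to lie, are the existence and area control of these discs for an \emph{arbitrary} compatible almost complex structure, and the precise verification that Chekanov's first-radius invariant coincides with the minimal Maslov-$2$ disc area, so that a single such disc actually bounds it from above.
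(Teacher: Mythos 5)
Your handling of axioms (1)--(3) and of the two lower bounds is fine and essentially what the paper does (it exhibits the tori $T_1$ and $T_{1/\sqrt{n}}$). The genuine gap is in the upper bounds, and it sits precisely where you flag ``the real work'': the two halves of your disc-counting strategy do not fit together. You need the inequality $\pi\big(r_1(L)\big)^2\leq$ (area of the one Maslov-$2$ holomorphic disc you produce), which requires the \emph{lower} bound that every Maslov-index-$2$ holomorphic disc on $L$ has area at least $\pi r_1^2$. For an elementary torus $T=T(r_1,\ldots,r_n)$ the group $\pi_2(\R^{2n},T)\cong\Z^n$ is generated by the coordinate disc classes $e_1,\ldots,e_n$ with $\mu(e_j)=2$ and area $\pi r_j^2$, so the Maslov-index-$2$ classes are all $\sum a_je_j$ with $\sum a_j=1$; many of these have positive area strictly below $\pi r_1^2$ (for instance $2e_1-e_2$ when $r_1<r_2<\sqrt{2}\,r_1$). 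For the standard complex structure on the standard torus one can exclude holomorphic representatives of such classes by explicit classification, but the almost complex structure forced on you is the push-forward of a standard one under an arbitrary global symplectomorphism, further adjusted to whatever the existence theorem demands; for such a $J$ you have no control, so producing one Maslov-$2$ disc of area $\leq\pi$ (resp.\ $\leq\pi/n$) does not give $\pi r_1^2\leq\pi$ (resp.\ $\leq\pi/n$). Moreover the existence inputs themselves (a Maslov-$2$ disc of area $\leq\pi$ in $Z$, and $n$ independent Maslov-$2$ discs of total area $\leq\pi$ in $B$, for arbitrary tori and the relevant $J$) are exactly the hard unpublished content of \cite{cielmoh}; as stated they are assumptions, not steps.

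The paper's proof sidesteps all of this with Ekeland--Hofer capacities: by \cite[Theorem 2.1]{chek96} one has $c_1^{\eh}\big(T(r_1,\ldots,r_n)\big)=\pi r_1^2$ and $c_n^{\eh}\big(T(r_1,\ldots,r_n)\big)=n\pi r_1^2$; these capacities are invariant under global symplectomorphisms and monotone, and $c_1^{\eh}(Z)=\pi=c_n^{\eh}(B)$ by \cite{ekho90}. Hence $L\subset Z$ gives $\pi r_1^2=c_1^{\eh}(L)\leq\pi$, and $L\subset B$ gives $n\pi r_1^2=c_n^{\eh}(L)\leq\pi$, i.e.\ $\pi r_1^2\leq\pi/n$. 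If you want to keep the holomorphic-disc route, you would first have to establish a $J$-independent lower bound $\pi r_1^2$ for Maslov-$2$ disc areas, which amounts to reproving Chekanov's invariance theorem rather than quoting it.
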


\begin{proof}
  We only have to verify the normalization axiom.
  For the lower bounds consider
  the tori $T_1$ and $T_{1/\sqrt{n}}$
  that have minimal symplectic action
  $\pi$ and $\pi/n$, resp.
  To obtain upper bounds consider
  a Lagrangian torus $L$ in $\R^{2n}$
  which is symplectomorphic
  to the elementary torus $T(r_1,\ldots,r_n)$.
  For $r_1=r_1(L)$ the values of the
  first and $n$-th Ekeland-Hofer capacity of $L$
  are
  \[
  c_1^{\eh}(L)=\pi r_1^2,\qquad
  c_n^{\eh}(L)=n\pi r_1^2,
  \]
  see \cite[Theorem 2.1]{chek96}.
  The claim follows now from
  \[
  c_1^{\eh}(Z)=\pi=c_n^{\eh}(B)
  \]
  and the monotonicity property
  of the Ekeland-Hofer capacities, see \cite{ekho90}.
\end{proof}

\begin{rem}
  Because $c_1^{\eh}$ takes the value $\pi r_1^2$
  on the polydisc $P(r_1,\ldots,r_n)$ 
  the proof shows that
  $a_e\big(P(r_1,\ldots,r_n)\big)=\pi r_1^2$.
\end{rem}

A direct consequence of the theorem is
that the torus $T(r_1,\ldots,r_n)$
admits a global symplectic embedding
into the symplectic cylinder $Z_R$ of radius $R$
if and only if $r_1\leq R$.
This non-squeezing result follows
alternatively from the stronger
\cite[Main Theorem]{chek98},
which gives an upper bound
on the area of a non-constant holomorphic disc
(for example for the standard complex structure)
attached to $T(r_1,\ldots,r_n)$
by its displacement energy.
The rational case
was observed by Sikorav in \cite{sik91}.
Note that Sikorav's theorem implies
the general case by approximating
irrational radii by rational numbers.

\begin{cor}
  \label{nsq1}
  If the torus $T(r_1,\ldots,r_n)$
  admits a global symplectic embedding
  into the ball $B_R$ of radius $R$
  then $\sqrt{n}\;r_1\leq R$.
\end{cor}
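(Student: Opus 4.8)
The plan is to deduce this non-squeezing statement directly from the special capacity $a_e$ constructed in the preceding theorem, by combining its monotonicity and conformality axioms with the invariance of the first radius $r_1$ under global symplectomorphisms.

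Concretely, suppose $\phi$ is a global symplectomorphism of $\R^{2n}$ with $\phi\big(T(r_1,\ldots,r_n)\big)\subset B_R$, and set $L:=\phi\big(T(r_1,\ldots,r_n)\big)$. Then $L$ is a Lagrangian torus contained in $B_R$ which is, by construction, symplectomorphic to the elementary torus $T(r_1,\ldots,r_n)$; hence it is an admissible competitor in the supremum defining $a_e(B_R)$. Because the first radius is invariant under global symplectomorphisms, we have $r_1(L)=r_1$, so the definition of $a_e$ together with monotonicity gives
\[
\pi r_1^2=\pi\big(r_1(L)\big)^2\leq a_e(B_R).
\]

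To evaluate the right-hand side, I would write $B_R=R\cdot B$ and apply conformality, $a_e(B_R)=R^2 a_e(B)$, followed by the normalization $a_e(B)=\pi/n$ from the theorem. This yields $a_e(B_R)=\pi R^2/n$, and combining with the previous inequality produces $\pi r_1^2\leq \pi R^2/n$, that is $n r_1^2\leq R^2$, which is exactly the claimed bound $\sqrt{n}\,r_1\leq R$.

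There is no genuinely hard step here: the argument is a formal consequence of the axioms once the theorem is available. The only points requiring care are verifying that the image $L$ still qualifies as a competitor for the supremum --- it is a Lagrangian torus symplectomorphic to an elementary one --- and that its invariant first radius is unchanged by $\phi$; both are immediate from the invariance of $r_1$ recorded before the theorem.
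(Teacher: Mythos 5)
Your proposal is correct and is precisely the argument the paper intends: the corollary is stated as an immediate consequence of the theorem on $a_e$, obtained by combining monotonicity, conformality, the normalization $a_e(B)=\pi/n$, and Chekanov's invariance of $r_1$ under global symplectomorphisms. The paper leaves this routine deduction implicit (offering only an alternative via the Cieliebak--Mohnke capacity in the following remark), so your write-up simply makes the intended proof explicit.
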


\begin{rem}
  This follows alternatively
  with the Cieliebak-Mohnke capacity,
  see \cite{cielmoh},
  via an approximation by rational Lagrangian tori.
\end{rem}

A second special capacity on $\R^{2n}$
can be constructed as follows:
Consider closed connected monotone
Lagrangian submanifolds $L\subset\R^{2n}$
which admit a metric of
non-positive sectional curvature
(and are therefore aspherical by the Hadamard-Cartan Theorem).
Notice that $L$ is allowed to be non-orientable
so that for example in dimension $4$
the curvature condition is not a restriction,
see \cite[0.4.A$_2$]{grom85}.

\begin{thm}
  For subsets $U$ in $\R^{2n}$
  the quantity
  \[
  a_m(U)
  :=\sup\{\inf(L)\,|\,
  \text{$L\subset U$}\},
  \]
  where the supremum
  is taken over all closed connected monotone
  Lagrangian submanifolds $L\subset\R^{2n}$
  which admit a metric of
  non-positive sectional curvature,
  defines a special capacity in $\R^{2n}$
  such that
  \[
  a_m(Z)=\pi,\qquad
  a_m(B)=\frac{\pi}{n}.
  \]
\end{thm}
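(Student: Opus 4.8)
The plan is to follow the pattern of the proof of the $a_e$ theorem: the first three axioms are formal, so that only the normalization requires work. Monotonicity is immediate, since shrinking the ambient set only shrinks the family of admissible $L$ over which the supremum is taken. Invariance under a global symplectomorphism $\phi$ of $\R^{2n}$ holds because $\phi$ carries closed connected monotone Lagrangians to closed connected monotone Lagrangians, preserves the diffeomorphism type of $L$ (hence the existence of a non-positively curved metric), and preserves symplectic areas, so $\inf(\phi(L))=\inf(L)$. Conformality follows the same way: scaling by $r$ is a conformal symplectomorphism that preserves the admissibility class and multiplies every disc area, and hence $\inf(L)$, by $r^2$. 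So I would record that only $a_m(Z)=\pi$ and $a_m(B)=\pi/n$ need proof.

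For the lower bounds I would reuse the monotone elementary tori. The equal-radii Clifford torus $T_1=\partial D_1\times\ldots\times\partial D_1$ is monotone and flat, hence admits a non-positively curved metric, so it is admissible; its first circle lies in $D$ and the remaining factors in $\R^{2n-2}$, so $T_1\subset Z$ with $\inf(T_1)=\pi$. Similarly $T_{1/\sqrt n}$ is admissible and lies on the sphere of radius $\sqrt n\cdot\tfrac1{\sqrt n}=1$, hence in $B$, with $\inf(T_{1/\sqrt n})=\pi/n$. These two examples give $a_m(Z)\geq\pi$ and $a_m(B)\geq\pi/n$.

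The upper bounds are the heart of the matter, and here I would bring in Damian's theorem. Every admissible $L$ is aspherical by Cartan--Hadamard, and Damian's solution of the Audin conjecture in the monotone case \cite{damian} forces the minimal Maslov number to be $N_L=2$. Consequently $\inf(L)$ is realized by a Maslov-index-$2$ holomorphic disc, and I would argue, exactly as Chekanov does for the elementary tori in \cite[Theorem 2.1]{chek96}, that this pins down the Ekeland--Hofer capacities of $L$ to
\[
c_1^{\eh}(L)=\inf(L),\qquad c_n^{\eh}(L)=n\,\inf(L).
\]
The conclusion then runs as in the previous theorem: monotonicity of the Ekeland--Hofer capacities \cite{ekho90} together with $c_1^{\eh}(Z)=\pi=c_n^{\eh}(B)$ yields $\inf(L)=c_1^{\eh}(L)\leq\pi$ for $L\subset Z$ and $n\,\inf(L)=c_n^{\eh}(L)\leq\pi$, i.e.\ $\inf(L)\leq\pi/n$, for $L\subset B$. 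Taking suprema and combining with the lower bounds gives the two normalization values.

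The hard part will be the upper bound, and more precisely the step that replaces Chekanov's explicit torus computation by a statement valid for \emph{every} admissible $L$. The input $N_L=2$ controls the minimal positive area and therefore plausibly the first capacity $c_1^{\eh}(L)=\inf(L)$ through the Maslov-$2$ disc, which should suffice for the cylinder. The delicate point is the factor $n$ responsible for the ball value $\pi/n$: establishing $c_n^{\eh}(L)=n\,\inf(L)$ requires understanding the full action spectrum of $L$ in all $n$ symplectic directions without an explicit model, and I expect this to be the true obstacle, to be handled via the Floer-theoretic structure of the Maslov-$2$ disc family guaranteed by $N_L=2$.
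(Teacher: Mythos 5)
Your formal axioms and lower bounds match the paper, and you correctly identify Damian's theorem as the source of a curve $\gamma$ of small Maslov index. But the upper bound has a genuine gap, and it is exactly the step you flag as ``the true obstacle.'' You propose to establish $c_1^{\eh}(L)=\inf(L)$ and $c_n^{\eh}(L)=n\inf(L)$ by arguing ``exactly as Chekanov does'' for elementary tori, but Chekanov's computation in \cite[Theorem 2.1]{chek96} is an explicit calculation tied to the product structure $\partial D_{r_1}\times\ldots\times\partial D_{r_n}$ and its action spectrum; there is no reason such equalities should hold, or be provable this way, for an arbitrary closed connected monotone Lagrangian admitting a non-positively curved metric. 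In particular your hoped-for identity $c_n^{\eh}(L)=n\inf(L)$ is both unproved and stronger than what is needed.

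The missing ingredient is Bates's capacity representation theorem \cite[Theorem 3]{bates98}: for a Lagrangian admitting a metric of non-positive sectional curvature one has the \emph{lower} bound $2k\eta\leq c_k^{\eh}(L)$, where $\eta>0$ is the monotonicity constant in $\lambda_L=\eta\mu_L$. Combined with Damian's curve $\gamma$ satisfying $\mu_L(\gamma)\leq 2$, which gives $\inf(L)\leq\lambda_L(\gamma)=\eta\,\mu_L(\gamma)\leq 2\eta$, one obtains $\inf(L)\leq c_k^{\eh}(L)/k$ for all $k$, and then $c_1^{\eh}(Z)=\pi$ and $c_n^{\eh}(B)=\pi$ finish the argument. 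Note that in your write-up the non-positive curvature hypothesis is only used for asphericity (to apply Damian); in the actual proof it is used a second time, and essentially, as the hypothesis of Bates's theorem. Only inequalities are needed, not the exact values of $c_k^{\eh}(L)$, which is what makes the argument go through without an explicit model of $L$.
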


\begin{proof}
  We only have to show that 
  $a_m(Z)=\pi$ and $a_m(B)=\pi/n$.
  The tori $T_1$ and $T_{1/\sqrt{n}}$
  yield lower bounds.
  Uniform upper bounds are obtained as follows:
  Consider a Lagrangian submanifold
  $L\subset\R^{2n}$ as above.
  Because $L$ is monotone
  the Liouville class $\lambda_L$
  and the Maslov class $\mu_L$
  are related by
  \[
  \lambda_L=\eta\mu_L
  \]
  for some $\eta>0$.
  By Damian's proof
  of the Audin conjecture
  we find a closed curve $\gamma$ on $L$
  such that $\mu_L(\gamma)\leq2$
  (equality if and only if $L$ is orientable),
  see \cite[Theorem 1.5.(a)]{damian}.
  This gives
  \[
  \inf(L)\leq\lambda_L(\gamma)\leq2\eta.
  \]
  Moreover, by Bates \cite[Theorem 3]{bates98},
  the $k$-th Ekeland-Hofer capacity satisfies
  \[
  2k\eta\leq c_k^{\eh}(L),
  \]
  where the curvature condition is used,
  so that
  \[
  \inf(L)\leq\frac{c_k^{\eh}(L)}{k}.
  \]
  The claim follows now from the
  properties of the first and $n$-th
  Ekeland-Hofer capacity.
\end{proof}

\begin{cor}
  \label{nsq2}
  Let $L\subset B_R$ be a closed
  connected Lagrangian submanifold.
  Then
  \[
  \inf(L)\leq\frac{\pi}{n}R^2
  \]
  provided $L$ is monotone
  and admits a metric of non-positive sectional curvature.
\end{cor}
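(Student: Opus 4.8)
The plan is to read the bound off directly from the preceding theorem, using both the defining supremum of the special capacity $a_m$ and the conformality axiom. First I would observe that the hypotheses imposed on $L$ in the corollary---closed, connected, monotone, and admitting a metric of non-positive sectional curvature---are precisely the conditions singling out the admissible Lagrangian submanifolds over which the supremum defining $a_m$ is taken. Hence $L$ is one of the competitors for $a_m(B_R)$, and since $L\subset B_R$ by assumption, the definition of the supremum immediately yields
\[
\inf(L)\leq a_m(B_R).
\]

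Next I would invoke the conformality property of a special capacity, applied to $B_R=R\cdot B$, to obtain $a_m(B_R)=R^2\,a_m(B)$. Substituting the normalization value $a_m(B)=\pi/n$ established in the theorem then gives
\[
\inf(L)\leq a_m(B_R)=R^2\,a_m(B)=\frac{\pi}{n}R^2,
\]
which is exactly the claimed inequality.

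Since every ingredient---the admissibility of $L$ for the supremum, the monotone structure of $a_m$, the conformal scaling under $U\mapsto RU$, and the computed value on the unit ball---is already in hand from the theorem and the axioms of a special capacity, there is no genuine obstacle: the corollary is a formal consequence. The only point demanding a moment's care is to confirm that the monotonicity and curvature hypotheses are phrased in the very same form as in the definition of $a_m$, so that $L$ legitimately enters the relevant supremum; granting this verification, the estimate follows at once.
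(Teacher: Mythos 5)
Your argument is correct and is essentially the one the paper intends: the corollary is stated without proof as an immediate consequence of the theorem establishing $a_m$ as a special capacity, obtained exactly as you do by noting that $L$ is admissible for the supremum defining $a_m(B_R)$ and then applying conformality together with $a_m(B)=\pi/n$.
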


\begin{rem}
  The case without the monotonicity assumption
  follows from the Cieliebak-Mohnke capacity \cite{cielmoh}.
\end{rem}


\section{Measuring the length\label{length}}

For irrational Lagrangian submanifolds
the symplectic area can be arbitrary small,
thus do not give a sensible invariant.
But the length of closed unit speed geodesics on $L$
for certain Riemannian metrics
is an alternative way
to measure the size of Lagrangian submanifolds $L$
symplectically.
\cite{geizeh11,geizeh12} construct the so-called
{\bf orbit capacity}
\[
c(V,\omega)=\sup\{{\inf}_{\ell}(\alpha)\,|\,
\text{$\exists$ contact type embedding
  $(M,\alpha)\hookrightarrow (V,\omega)$}\}
\]
for all symplectic manifolds
$(V,\omega)$ with dimension $\geq4$.
Here ${\inf}_{\ell}(\alpha)$
is the infimum of the total action
of null-homologous Reeb links
on the closed contact manifold $(M,\alpha)$.
As the arguments in \cite{geizeh12} show
closed Reeb orbits that constitute a
null-homologous one-component Reeb link
can be assumed to be contractible.
The supremum is taken over all
embeddings $j\co M\hookrightarrow V$
such that near $j(M)$ there
is a Liouville vector field $Y$ for $\omega$
satisfying $\alpha=j^*(i_Y\omega)$.
Notice that this is equivalent to $d\alpha=j^*\omega$
for the contact form $\alpha$,
see \cite[p.\ 119]{hoze94}.

If one restricts
in the definition of the orbit capacity $c$
to manifolds $M$ that are
diffeomorphic to the unit cosphere bundle $S^*Q$
of closed Riemannian manifolds $Q$
one obtains the link capacity $\ell$.

\begin{thm}
  \label{tool}
  For symplectic manifolds $(V,\omega)$
  with dimension $\geq4$ the quantity
  $\ell(V,\omega)$
  is an intrinsic capacity
  such that
  \[
  \ell(Z)=\pi,\qquad
  \ell(B)\geq\frac{\pi}{n}.
  \]
\end{thm}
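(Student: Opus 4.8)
The plan is to read off everything from the structural fact that $\ell$ is, by definition, the orbit capacity $c$ with the supremum restricted to contact hypersurfaces diffeomorphic to a unit cosphere bundle; in particular $\ell(V,\omega)\le c(V,\omega)$ for all $(V,\omega)$. From this the monotonicity and conformality axioms are inherited verbatim from $c$: a symplectic embedding $(V_1,\omega_1)\hookrightarrow(V_2,\omega_2)$ sends an embedded cosphere bundle to an embedded cosphere bundle carrying the same Reeb dynamics, hence the same null-homologous links with the same total actions, so the supremum over $V_2$ dominates that over $V_1$; and replacing $\omega$ by $r^2\omega$ scales every action by $r^2$. What remains is to pin down the two normalization values, and the only substantial point there is the upper bound on the cylinder, the two lower bounds being explicit.

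For the upper bound I would invoke $\ell(Z)\le c(Z)=\pi$, the value of the orbit capacity on the cylinder being supplied by \cite{geizeh11,geizeh12}. The mechanism behind it --- the step I expect to be the genuine obstacle if one insists on a self-contained proof --- is a neck-stretching argument. One fills $Z=D\times\R^{2n-2}$ by the holomorphic discs $D\times\{w\}$ of the split almost complex structure, each of symplectic area $\pi$, and makes the structure cylindrical along the contact hypersurface $M\subset Z$. Stretching the neck degenerates such a disc into a holomorphic building whose top level is a finite-energy curve in the symplectization of $M$, positively asymptotic to a collection of Reeb orbits; these bound the lower levels of the building and hence form a null-homologous Reeb link, whose total action is at most the energy $\pi$ of the disc. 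For a cosphere bundle $M$ this is exactly the argument already carried out for $c$, so it applies without change and gives ${\inf}_{\ell}(\alpha)\le\pi$, hence $\ell(Z)\le\pi$.

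For the lower bounds I would exhibit explicit cosphere bundles of flat tori. Equip $T^n=S^1_{a_1}\times\dots\times S^1_{a_n}$ with the product flat metric $g$ of circle lengths $a_1\le\dots\le a_n$; then the Reeb flow on $S^*(g)T^n$ is the geodesic flow and its closed orbits are the lattice geodesics. Such an orbit projects to a non-contractible geodesic, so it pushes forward to a nonzero class in $H_1(T^n)$ and is never null-homologous; on the other hand a shortest geodesic together with its reverse forms a two-component null-homologous link of total action $2a_1$, and no null-homologous link is shorter, since each component has length at least $a_1$ and at least two are needed for the direction classes to cancel in $H_1(T^n)$. Thus ${\inf}_{\ell}(\alpha)=2a_1=2\inf(g)$. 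The round codisc bundle lies inside the product bundle $\prod_i\{|p_i|\le1\}$, a product of $n$ annuli of areas $2a_1,\dots,2a_n$, and an open annulus of area $A$ is symplectomorphic to the punctured disc of area $A$; this reduces the embedding to an elementary packing.

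Taking $a_1=\tfrac{\pi}{2}$ with $a_2,\dots,a_n$ large embeds the first annulus, of area $\pi$, into $D$ and the remaining ones into $\R^{2n-2}$, giving $D^*(g)T^n\hookrightarrow Z$ with ${\inf}_{\ell}=\pi$; together with the upper bound this yields $\ell(Z)=\pi$. Taking instead every $a_i=\tfrac{\pi}{2n}$ makes each annulus have area $\pi/n$, so the product bundle embeds into the polydisc $P(1/\sqrt n,\dots,1/\sqrt n)\subset B$; here ${\inf}_{\ell}=\pi/n$, which gives $\ell(B)\ge\pi/n$ and completes the proof.
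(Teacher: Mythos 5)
Your proposal is correct and follows essentially the same route as the paper: the axioms and the upper bound $\ell(Z)\leq\pi$ are inherited from the orbit capacity $c$ via $\ell\leq c$, and the lower bounds come from codisc bundles of flat tori embedded as products of annuli into $Z$ and into a polydisc inside $B$, with $\inf_{\ell}$ computed from the two-component link formed by a shortest geodesic and its reverse. The only cosmetic difference is that you vary the circle lengths of the torus while the paper fixes $T^n=\R^n/2\pi\Z^n$ and varies the codisc radius, using the explicit symplectomorphisms $\varphi_a$ and a limiting argument in place of your ``annulus $\cong$ punctured disc'' step.
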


\begin{proof}
  Because of $\ell\leq c$
  we only need to compute
  the values on the ball and the cylinder.
  Identify the cotangent bundle
  of the unit circle $S^1=\partial D$
  with $(\R\times S^1,s\rmd t)$.
  Consider polar coordinates on $\C$
  such that the radial Liouville primitive
  of the standard symplectic form
  equals $\tfrac12 r^2\rmd\theta$.
  For $a>0$ we define a symplectic embedding
  \[
  \varphi_a(s,t)=\sqrt{a+2s}\;\rme ^{\rmi t}
  \]
  of $\{s>-\tfrac{a}{2}\}$ into $\C$.
  The image of the $b$-codisc bundle 
  \[
  D_b^* S^1=(-b,b)\times S^1\;,
  \]
  $b\in(0,\tfrac{a}{2})$,
  is the annulus
  \[
  A(a,b)=
  A_{\sqrt{a+2b},\sqrt{a-2b}}=
  D_{\sqrt{a+2b}}\setminus\overline{D}_{\sqrt{a-2b}}\;.
  \]
  For real numbers
  $a_1,\ldots,a_n,b_1,\ldots,b_n$
  with $0<b_j<\tfrac{a_j}{2}$ for $j=1,\ldots,n$
  the embedding
  \[
  \varphi_{a_1}\times\ldots\times\varphi_{a_n}
  \]
  maps $D_{b_1}^* S^1\times\ldots\times D_{b_n}^* S^1$
  onto the polyannulus $A(a_1,b_1)\times\ldots\times A(a_n,b_n)$
  symplectically.
  
  In order to compute the quantity $\ell$
  we consider the flat torus $T^n=\R^n/2\pi\Z^n$.
  The $b$-cosphere bundle $S_b^* T^n$
  is provided with the canonical contact form.
  Each closed geodesic
  induces a null-homologous Reeb link
  with two components
  corresponding to the opposing
  orientations of the geodesic.
  Hence,
  the smallest total action $\inf_{\ell}$
  equals $4\pi b$,
  see \cite[Section 1.5]{geig08}. 
  Because the $b$-codisc bundle $D_b^* T^n$
  is contained in $(D_b^* S^1)^n$
  the images of the $b'$-cosphere bundles
  under $(\varphi_a)^n$ for $b'<b$
  are hypersurfaces of contact type.
  Taking the limits
  $a\!\downarrow\!\tfrac{1}{2}$ and
  $b\!\uparrow\!\tfrac{1}{4}$,
  resp., $a\!\downarrow\!\tfrac{1}{2n}$ and
  $b\!\uparrow\!\tfrac{1}{4n}$
  proves the claim.
\end{proof}

\begin{rem}
  For $\varepsilon>0$ sufficiently small
  the disc of radius $2\sqrt{b}-\varepsilon$
  embeds into the square $(-b,b)\times (0,2\pi)$
  preserving the orientation and the area,
  cf.\ \cite[p.\ 171]{hoze94}.
  Composing this with
  $\varphi_{a_1}\times\ldots\times\varphi_{a_n}$
  appropriately
  yields an symplectic embedding of the polydisc
  \[
  P\big(2\sqrt{b_1}-\varepsilon,\ldots,2\sqrt{b_n}-\varepsilon\big)
  \]
  into the polyannulus
  \[
  A(a_1,b_1)\times\ldots\times A(a_n,b_n)\subset
  P\big(\sqrt{a_1+2b_1},\ldots,\sqrt{a_n+2b_n}\big).
  \]
  Therefore,
  one shows that $\ell\big(P(r_1,\ldots,r_n)\big)=\pi r_1^2$
  as in the proof above.
  Moreover,
  if we consider the metric on $T^n$ induced from $\R^n$
  we see together with the remark after
  \cite[Theorem 4.5]{geizeh12}
  that
  \[
  \ell\big(D^*_{b_1}T^n\big)=
  4\pi b_1=
  \ell\big(D^*_{b_1}S^1\times\ldots\times D^*_{b_n}S^1\big),
  \]
  where we assume that $0<b_1\leq\ldots\leq b_n$.
\end{rem}

Consider a closed monotone Lagrangian submanifold
$L\subset\R^{2n}$ that admits a metric $g$
of non-positive sectional curvature.
By Weinstein's neighbourhood theorem \cite{wein71}
there exists $r>0$
such that the $r$-codisc bundle of $L$ embeds symplectically.
We denote its image by $U_r\subset\R^{2n}$.
Then \cite[Theorem 2.1]{bates98}
implies that for all $k\in\N$
\[
\inf(L)+r\inf(g)\leq c_k^{\eh}(U_r),
\]
where $\inf(g)$ denotes the length
of the shortest non-trivial closed geodesic of $(L,g)$.
In particular,
if $U_r\subset Z_R$ then $r\inf(g)\leq\pi R^2$.
This observation generalizes to the following non-squeezing result.

\begin{cor}
  \label{satz1}
  Let $(Q,g)$ be a closed Riemannian manifold
  such that all closed geodesics are not contractible.
  If a neighbourhood of the $r$-cosphere bundle
  in $T^*Q$ embeds into $Z_R$
  symplectically then
  \[
  2r\inf(g)\leq\pi R^2.
  \]
\end{cor}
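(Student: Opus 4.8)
The plan is to deduce the inequality directly from the link capacity of Theorem~\ref{tool}. First I would observe that the $r$-cosphere bundle $S^*_r Q\subset T^*Q$ is a hypersurface of contact type: the fibrewise radial field $Y$ is a Liouville vector field for the canonical symplectic form $\omega$ on a neighbourhood of $S^*_r Q$, and $\alpha:=(i_Y\omega)|_{S^*_r Q}$ is a contact form whose Reeb flow is the geodesic flow. Since $Y$ is defined on the given neighbourhood, the symplectic embedding carries $S^*_r Q$ to a contact type hypersurface in $Z_R$ that is diffeomorphic to a cosphere bundle; so by the definition of $\ell$ as a supremum over exactly such embeddings, together with the conformality relation $\ell(Z_R)=R^2\ell(Z)=\pi R^2$ (where $\ell(Z)=\pi$ by Theorem~\ref{tool}), one obtains
\[
\pi R^2=\ell(Z_R)\geq{\inf}_{\ell}(\alpha).
\]
It thus suffices to prove the lower bound ${\inf}_{\ell}(\alpha)\geq 2r\inf(g)$.

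Next I would transcribe the torus computation from the proof of Theorem~\ref{tool} to the present setting. The closed Reeb orbits of $\alpha$ are exactly the closed geodesics of $(Q,g)$, and a geodesic of length $L$ corresponds to a Reeb orbit of action $rL$; in particular every closed Reeb orbit has action at least $r\inf(g)$. Moreover a closed geodesic together with its orientation reversal lifts to a pair of Reeb orbits $\widetilde\gamma_{+},\widetilde\gamma_{-}$, and joining the two antipodal covectors inside each fibre sphere $S^{d-1}$ --- which is connected because $d=\dim Q\geq 2$ --- sweeps out a $2$-chain realising $\widetilde\gamma_{+}+\widetilde\gamma_{-}$ as a boundary, exactly as for the flat torus. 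Applied to a shortest geodesic this produces a null-homologous two-component link of total action $2r\inf(g)$.

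The crux, and the step I expect to be the real obstacle, is to rule out null-homologous links of smaller action, that is, to show that no null-homologous Reeb link has a single component. Here the hypothesis enters: the induced map $\pi_{*}\co\pi_1(S^*_r Q)\to\pi_1(Q)$ sends the class of a Reeb orbit to the class of the underlying geodesic, so a contractible Reeb orbit would project to a contractible geodesic, which is excluded; hence no closed Reeb orbit is contractible. By the remark preceding Theorem~\ref{tool} (following \cite{geizeh12}) the orbit of a one-component null-homologous link may be taken contractible, so under our hypothesis such links do not occur and every null-homologous link has at least two components. Since each component has action at least $r\inf(g)$, this yields ${\inf}_{\ell}(\alpha)\geq 2r\inf(g)$, and combined with the displayed inequality we obtain $2r\inf(g)\leq\pi R^2$.
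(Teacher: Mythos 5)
Your proposal is correct and follows essentially the same route as the paper, whose proof is just the two-line remark that the claim is an application of the capacity $\ell$ together with the inequality $2r\inf(g)\leq\inf_{\ell}(\alpha_r)$. You have simply filled in the details the paper leaves implicit, in particular correctly locating where the no-contractible-geodesics hypothesis enters (ruling out one-component null-homologous Reeb links via the contractibility remark from \cite{geizeh12}).
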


\begin{proof}
  The claim is 
  an application of the capacity $\ell$.
  Notice that $2r\inf(g)\leq\inf_{\ell}(\alpha_r)$
  if computed with respect to the restriction $\alpha_r$
  of the canonical Liouville form to $TS^*_r(g)Q$.
\end{proof}

For a continuative discussion the reader is referred to \cite[Section 3.6]{zeh13}.


\begin{ack}
  I thank
  Hansj\"org Geiges, Janko Latschev, and Stefan Suhr
  for their comments on the first version of these notes,
  Felix Schlenk for directing my attention to the work
  of S. M. Bates \cite{bates98}
  during his stay at the Universit\"at Leipzig.
  Further I would like to thank Fabian Ziltener
  for drawing my interest towards this problem in the first place.
  Part of the research in this article was carried out during the conference
  \emph{Periodic Orbits In Contact and Riemannian Geometry}
  from September 3rd through September 7th 2012 in Le Touquet-Paris-Plage.
  I would like to thank the organizers
  Juan-Carlos \'Alvarez Paiva and Florent Balacheff
  as well as
  Urs Frauenfelder,
  Emmanuel Opshtein,
  Yaron Ostrover,
  Federica Pasquotto, and
  Ana Rechtman
  for many stimulating discussions.
\end{ack}


\end{document}